\documentclass[preprint,12pt]{elsarticle}
\usepackage{amsmath}
\usepackage{amssymb}
\usepackage{amsthm}
\newtheorem{theorem}{Theorem}[section]
\newtheorem{lemma}[theorem]{Lemma}
\newtheorem{corollary}[theorem]{Corollary}
\newtheorem{example}[theorem]{Example}
\newtheorem{remark}[theorem]{Remark}
\newtheorem{definition}[theorem]{Definition}
\newcommand{\F}{\mathbb F_2}
\newcommand{\wt}{\operatorname{wt}}
\begin{document}
\begin{frontmatter}

\title{Certificate Complexity of Elementary Symmetric Boolean Functions of Arbitrary Degree}

\author[gsu]{Jing Zhang}
\ead{jzhang@govst.edu}

\author[wssu]{Yuan Li\corref{cor1}}
\ead{liyu@wssu.edu}
\cortext[cor1]{Corresponding author.}

\affiliation[gsu]{organization={Mathematics Department, Governors State University},
            city={University Park},
            state={Illinois},
            postcode={60484},
            country={USA}}
\affiliation[wssu]{organization={Department of Mathematics, Winston--Salem State University},
            city={Winston--Salem},
            state={North Carolina},
            postcode={27110},
            country={USA}}

\begin{abstract}
Let $\sigma_{n,d}$ denote the elementary symmetric Boolean function of $n$ variables and degree $d$. Previous work determined $C(\sigma_{n,d})$ when $d$ is odd or a power of two, but the general even non-power-of-two case remained open. We determine the certificate complexity for every degree $1\le d\le n$, thereby completing the classification for elementary symmetric Boolean functions. Writing $d=2^t m$ with $m$ odd, we obtain an explicit formula in which the possible deficit from the maximal value $n$ is controlled by $2^t$, while the exact value is determined by a binary containment condition involving $m$. In particular,
\[
n-2^{\nu_2(d)}+1\le C(\sigma_{n,d})\le n,
\]
and we characterize when the upper bound is attained. Moreover, we determine the least positive period of the certificate-complexity deficit $\Delta_d(n)=n-C(\sigma_{n,d})$: it is $1$ for odd $d$, equals $d$ when $d$ is a power of two, and equals $2^{\lfloor\log_2 d\rfloor+1}$ for even non-power-of-two $d$. This least-period problem is distinct from the classical periodicity of the underlying value sequence $\binom{j}{d}\bmod 2$. The known odd-degree and power-of-two formulas are recovered as special cases.
\end{abstract}

\begin{keyword}
elementary symmetric Boolean function \sep certificate complexity \sep Lucas theorem \sep binary expansion
\end{keyword}

\end{frontmatter}

\noindent\textbf{2020 Mathematics Subject Classification:} Primary 68Q25; Secondary 06E30.

\section{Introduction}
Boolean functions are fundamental in switching theory, circuit and decision-tree complexity, cryptography, coding theory, and discrete dynamical systems. Symmetric Boolean functions form one of the most basic classes: their values depend only on Hamming weight. Their study goes back at least to Shannon's work on relay and switching circuits \cite{Shannon1938}; structural and cryptographic aspects were subsequently developed in, among many other works, \cite{ArnoldHarrison1963,CanteautVideau2005,CusickLiStanica2008,CusickLiStanica2009}.

Sensitivity, block sensitivity, certificate complexity, and decision-tree complexity are standard combinatorial measures of Boolean functions. Sensitivity arose in the study of parallel computation \cite{CookDworkReischuk1986}; Nisan developed block sensitivity and certificate methods in decision-tree complexity \cite{Nisan1991}. General relations among query-complexity measures are surveyed by Buhrman and de Wolf \cite{BuhrmanDeWolf2002}. More recent work has investigated these measures specifically for symmetric functions, including their separations and relations to adversary bounds and approximate degree \cite{MittalNairPatro2021}.

For elementary symmetric Boolean functions, binary arithmetic enters naturally. If $x\in\{0,1\}^n$ has Hamming weight $w$, then
\[
\sigma_{n,d}(x)=\binom{w}{d}\pmod2.
\]
Lucas' theorem \cite{Lucas1878} therefore converts the value sequence of $\sigma_{n,d}$ into a binary digit-containment problem. This viewpoint is standard in the literature on elementary symmetric Boolean functions and has been important in work on balancedness and cryptographic properties \cite{CanteautVideau2005,CusickLiStanica2008,CusickLiStanica2009}, as well as sensitivity and block sensitivity \cite{ZhangLiAdeyeye2021}.

Zhang, Li, and Adeyeye \cite{ZhangLiAdeyeye2021} obtained explicit results for sensitivity and block sensitivity of elementary symmetric Boolean functions. In our previous work \cite{ZhangLi2022}, we studied certificate complexity. We proved
\[
C(\sigma_{n,d})=n\quad(d\text{ odd})
\]
and, for $d=2^k$,
\[
C(\sigma_{n,2^k})=2^k\left\lfloor\frac n{2^k}\right\rfloor.
\]
In that paper, we left open the determination of certificate complexity for
\[
d=2^{k_1}+\cdots+2^{k_m},\qquad k_1>\cdots>k_m\ge1,\quad m\ge2.
\]
The present paper solves that open question completely.

Dahiya and Mahajan \cite{DahiyaMahajan2023} studied certificate complexity in connection with decision-tree rank. For a symmetric Boolean function, they characterized its certificate complexity by the shortest maximal interval of consecutive Hamming weights on which the function is constant. We use this known result explicitly. Thus, the constant-interval characterization itself is not new here. Our contribution is to combine that characterization with the special binary structure of the elementary symmetric Boolean function $\sigma_{n,d}$ supplied by Lucas' theorem. This yields an explicit formula for every degree $d$ and solves the open question left in our previous work \cite{ZhangLi2022}.

The resulting theorem applies to every degree. In particular, it closes the open family posed in our previous work \cite{ZhangLi2022} and gives short derivations of both previously known cases. It also yields structural information not contained in the special-case formulas: a sharp uniform bound on the deficit from $n$, a characterization of when the maximal value $C(\sigma_{n,d})=n$ occurs, and an exact determination of the least positive period of $\Delta_d(n)=n-C(\sigma_{n,d})$. The latter should not be confused with the classical periodicity of the underlying value sequence $\binom{j}{d}\bmod2$. The binary value sequence has period $2^{\lfloor\log_2 d\rfloor+1}$, a fact used previously in the study of elementary symmetric Boolean functions \cite{CanteautVideau2005,CusickLiStanica2008}. In contrast, the least period of the certificate-complexity deficit can collapse: it is $1$ for odd degrees and $d$ for power-of-two degrees, while the full binary period persists for even non-power-of-two degrees. To the best of our knowledge, this least-period behavior of the certificate-complexity deficit has not previously been determined.

\section{Preliminaries}

Let
\[
\F=\{0,1\},\qquad
\F^n=\{(x_1,\ldots,x_n):x_i\in\F,\ 1\le i\le n\}.
\]

\begin{definition}
For $x=(x_1,\ldots,x_n)\in\F^n$, the \emph{Hamming weight} of $x$,
denoted by $\wt(x)$, is the number of coordinates of $x$ that are equal
to $1$. Since $x_i\in\{0,1\}$,
\[
\wt(x)=x_1+\cdots+x_n.
\]
\end{definition}

\begin{definition}
A Boolean function $f:\F^n\to\F$ is called \emph{symmetric} if its
value depends only on the Hamming weight of the input. Thus, if
$\wt(x)=\wt(y)$, then $f(x)=f(y)$.
\end{definition}

For $1\le d\le n$, define the elementary symmetric Boolean function of
degree $d$ by
\[
\sigma_{n,d}(x_1,\ldots,x_n)
=\bigoplus_{1\le i_1<\cdots<i_d\le n}
x_{i_1}\cdots x_{i_d},
\]
where $\oplus$ denotes addition modulo $2$.

If $\wt(x)=j$, exactly $\binom{j}{d}$ monomials in the above sum have
value $1$. Therefore
\begin{equation}\label{eq:binomial}
\sigma_{n,d}(x)=\binom{\wt(x)}{d}\pmod2.
\end{equation}
For $0\le t\le n$, put
\[
\varepsilon_d(t)=\binom{t}{d}\pmod2.
\]
Thus $\varepsilon_d(t)$ is the value of $\sigma_{n,d}$ on every input
of Hamming weight $t$.

\begin{definition}
Let $f:\F^n\to\F$ and let $\alpha=(a_1,\ldots,a_n)\in\F^n$. A subset
$S=\{i_1,\ldots,i_k\}\subseteq\{1,\ldots,n\}$ is called a
\emph{certificate} of $f$ at $\alpha$ if fixing
\[
x_{i_1}=a_{i_1},\ldots,x_{i_k}=a_{i_k}
\]
forces the restricted function to be the constant $f(\alpha)$.
The \emph{certificate complexity} of $f$ at $\alpha$, denoted by
$C(f,\alpha)$, is the minimum cardinality of a certificate of $f$ at
$\alpha$. The certificate complexity of $f$ is
\[
C(f)=\max_{\alpha\in\F^n}C(f,\alpha).
\]
\end{definition}

For a symmetric Boolean function $f$, let
\[
f_j=f(x)\quad\text{whenever }\wt(x)=j,\qquad 0\le j\le n.
\]
The finite sequence
\[
f_0,f_1,\ldots,f_n
\]
will be called the \emph{Hamming-weight value sequence} of $f$.

\begin{definition}
A \emph{constant interval} of a symmetric Boolean function is an
interval of integers $[A,B]\subseteq[0,n]$ such that
\[
f_A=f_{A+1}=\cdots=f_B.
\]
It is \emph{maximal} if it cannot be extended to the left or to the
right while keeping the same value. Its \emph{length} is $B-A+1$.
Let $L_{\min}(f)$ denote the minimum length among all maximal constant
intervals of $f$.
\end{definition}

\begin{remark}
In our previous work with J.O. Adeyeye \cite{ZhangLiAdeyeye2021}, in which
we are the first two authors, we used the value vector of a symmetric
Boolean function and wrote it as consecutive runs of equal values. In the
terminology of Definition 2.4, each maximal run of equal entries in the
value vector corresponds exactly to a maximal constant interval, and the
number of entries in that run is the length of the interval.
\end{remark}

The following result is due to Dahiya and Mahajan
\cite[Lemma 5.9]{DahiyaMahajan2023}. Their notation
$\operatorname{Gap}_{\min}(f)$ is one less than the length used here.

\begin{lemma}[Dahiya--Mahajan]\label{lem:DM}
If $f:\F^n\to\F$ is a nonconstant symmetric Boolean function, then
\[
C(f)=n-L_{\min}(f)+1.
\]
\end{lemma}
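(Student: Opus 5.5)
We have to prove the Dahiya--Mahajan lemma: for a nonconstant symmetric $f$, $C(f)=n-L_{\min}(f)+1$. The plan is to first compute $C(f,\alpha)$ for each input $\alpha$ in terms of the maximal constant interval containing $\wt(\alpha)$, and then maximize over $\alpha$.

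\textbf{Local computation.} Fix $\alpha$ with $\wt(\alpha)=w$, and let $[A,B]$ be the maximal constant interval containing $w$. Suppose a certificate $S$ fixes $p$ ones and $q$ zeros of $\alpha$. Then the free coordinates number $w-p$ ones and $n-w-q$ zeros of $\alpha$. The weights reachable under the restriction are exactly the integers in the interval
\[
[p,\;p+(n-p-q)]=[p,\;n-q].
\]
Therefore $S$ is a certificate if and only if $[p,n-q]\subseteq[A,B]$, that is, $p\ge A$ and $q\ge n-B$. We also have the constraints $p\le w$ and $q\le n-w$. Since $A\le w\le B$, these are compatible, and the minimal certificate takes $p=A$ and $q=n-B$. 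Hence
\[
C(f,\alpha)=A+n-B=n-(B-A+1)+1.
\]

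\textbf{Maximizing over $\alpha$.} As $\alpha$ ranges over $\F^n$, $\wt(\alpha)$ takes every value in $[0,n]$, so every maximal constant interval $[A,B]$ occurs. Maximizing the local formula therefore amounts to minimizing the length $B-A+1$, which gives $C(f)=n-L_{\min}(f)+1$.

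\textbf{Use of nonconstancy.} Nonconstancy is not actually needed for the argument. If $f$ were constant, the only maximal interval would be $[0,n]$, and the formula would give $0$, which is also correct. So the argument goes through either way.

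\textbf{Main point to get right.} The only delicate step is justifying that the set of reachable weights is the full integer interval $[p,n-q]$, with no gaps. This holds because the free coordinates can be set independently, so the number of free coordinates set to $1$ can be any integer from $0$ to $n-p-q$. With that in hand, the minimality of the choice $(p,q)=(A,n-B)$ follows directly from the two inequalities $p\ge A$ and $q\ge n-B$.
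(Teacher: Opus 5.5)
Your proof is correct. Note, however, that the paper does not prove this lemma at all. It imports the result from Dahiya and Mahajan \cite[Lemma 5.9]{DahiyaMahajan2023}, and its only contribution is the notational translation $\operatorname{Gap}_{\min}(f)=L_{\min}(f)-1$. Your argument is therefore a self-contained replacement for a citation rather than a variant of an in-paper proof.

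The key step checks out. Fixing $p$ ones and $q$ zeros of $\alpha$ leaves exactly the weight interval $[p,n-q]$, and this interval contains $w=\wt(\alpha)$. Because the runs partition $[0,n]$, it is a certificate iff $[p,n-q]\subseteq[A,B]$. The minimizer $(p,q)=(A,n-B)$ is feasible because $A\le w\le B$.

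Compared with the citation, your route has three advantages:
\begin{itemize}
\item It checks the off-by-one translation between the two notations directly, instead of leaving that to the reader.
\item It gives the sharper pointwise formula $C(f,\alpha)=A+(n-B)$, where $[A,B]$ is the run containing $\wt(\alpha)$.
\item It shows the nonconstancy hypothesis is superfluous under the paper's definitions. A constant $f$ has the single run $[0,n]$, so $L_{\min}(f)=n+1$, and both sides equal $0$ because the empty set is a certificate.
\end{itemize}
The citation buys brevity and proper attribution. Since Theorem~\ref{thm:main} uses only the global formula for nonconstant $\sigma_{n,d}$, either approach suffices for the paper.
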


For nonnegative integers $r$ and $s$, write
\[
r\preceq_2s
\]
if every binary digit that is $1$ in $r$ is also $1$ in $s$. For
example, $5=101_2\preceq_2 7=111_2$, whereas
$5\npreceq_2 6=110_2$.

We use Lucas' theorem modulo $2$. In the present notation it says
\begin{equation}\label{eq:lucas}
\binom{t}{d}\equiv1\pmod2
\quad\Longleftrightarrow\quad d\preceq_2t.
\end{equation}

\section{Certificate complexity for arbitrary degree}
\begin{theorem}\label{thm:main}
Let $1\le d\le n$. Write
\[
d=2^cD,\qquad D\text{ odd},\qquad h=2^c,
\]
and $n=hq+u$, $0\le u<h$. Then
\begin{equation}\label{eq:main}
C(\sigma_{n,d})=\begin{cases}
n-u,&D\preceq_2q\text{ or }D\preceq_2(q-1),\\[1mm]
n-h+1,&\text{otherwise}.
\end{cases}
\end{equation}
\end{theorem}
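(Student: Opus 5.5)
We will reduce everything to Lemma~\ref{lem:DM}, so the task is to compute $L_{\min}(\sigma_{n,d})$ for the value sequence $\varepsilon_d(0),\ldots,\varepsilon_d(n)$; since $\varepsilon_d(d)=1$ and $\varepsilon_d(0)=0$, the function is nonconstant and the lemma applies. The claim becomes
\[
L_{\min}=\begin{cases} u+1, & D\preceq_2 q\text{ or }D\preceq_2(q-1),\\ h, & \text{otherwise}.\end{cases}
\]

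\textbf{Block structure via Lucas.} Write $t=hs+r$ with $0\le r<h$. Since the low $c$ bits of $d$ are zero, \eqref{eq:lucas} gives $\varepsilon_d(t)=1$ iff $D\preceq_2 s$. So the sequence is constant on each full block $B_s=[hs,hs+h-1]$, with value $g(s):=[D\preceq_2 s]$. The blocks $B_0,\ldots,B_{q-1}$ are full, and the last block $B_q\cap[0,n]=[hq,hq+u]$ has length $u+1$. Each maximal constant interval is therefore a union of consecutive blocks, i.e.\ a maximal run of $g$ on $\{0,\ldots,q\}$, with length $h$ times the number of full blocks it contains, plus $u+1$ if it contains the truncated last block.

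\textbf{Computing the minimum.} Interior runs have length at least $h$. The run containing the last block has length $u+1$ exactly when it consists of block $q$ alone, i.e.\ when $q\ge1$ and $g(q)\ne g(q-1)$; otherwise it has length at least $h+u+1>h$. In the first case $L_{\min}=u+1$, since $u+1\le h$. Now $g(q)\ne g(q-1)$ means exactly one of $D\preceq_2 q$ and $D\preceq_2 (q-1)$ holds. We claim both cannot hold: $D$ is odd, so $D\preceq_2 s$ forces $s$ odd, and $q,q-1$ have opposite parity. Hence ``exactly one'' is the same as ``$D\preceq_2 q$ or $D\preceq_2(q-1)$'', matching the first case of \eqref{eq:main}. (The degenerate case $q=0$ cannot occur, since $d\le n$ gives $q\ge D\ge1$.)

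\textbf{Otherwise case.} This is the step needing care. We must show that some maximal run of $g$ consisting of exactly one full block exists, giving $L_{\min}=h$ and $C=n-h+1$. The natural candidate is $s=D$. Indeed $g(D)=1$, and $g(D-1)=0$ because $D-1$ is even while $D\preceq_2 s$ forces $s$ odd. We also need $g(D+1)=0$ (again because $D+1$ is even) and $D+1\le q$, so that block $D$ is full and flanked by a block of different value. In the otherwise case, $q\ge D$ and $D\not\preceq_2 q$, so $q\ne D$, hence $q\ge D+1$. This gives a maximal run of length exactly $h$. Combined with the observation that interior runs have length at least $h$ and the last run has length greater than $h$, this yields $L_{\min}=h$. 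Throughout, the parity argument (odd $D$ forces $g$ to vanish on even $s$) is the key simplification, and most care goes into the boundary bookkeeping at $s=0$ and $s=q$.
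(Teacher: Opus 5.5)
Your proposal is correct and takes essentially the same route as the paper: Lucas' theorem gives constant blocks of length $h$ with value $[D\preceq_2 s]$, and oddness of $D$ isolates the $1$-blocks. The case analysis on the values of blocks $q-1$ and $q$, combined with Lemma~\ref{lem:DM}, then gives the formula. Your explicit check that block $D$ is full and flanked by $0$-blocks (via $q\ge D+1$ in the otherwise case) is a slightly more careful version of the paper's remark that $\varepsilon_d(d)=1$ guarantees a complete $1$-interval of length $h$.
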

\begin{proof}
By \eqref{eq:binomial}, the sequence
\[
\varepsilon_d(0),\varepsilon_d(1),\ldots,\varepsilon_d(n)
\]
is exactly the Hamming-weight value sequence of $\sigma_{n,d}$.

Write
\[
j=ht+v,\qquad 0\le v<h.
\]
Since $d=hD=2^cD$, the last $c$ binary digits of $d$ are zero.
By Lucas' theorem,
\[
\varepsilon_d(ht+v)=1
\quad\Longleftrightarrow\quad
D\preceq_2t.
\tag{3.2}
\]
Thus, for each fixed $t$, as $v$ runs through
\[
0,1,\ldots,h-1,
\]
the $h$ consecutive Hamming weights
\[
ht,ht+1,\ldots,ht+h-1
\]
give the same value of $\varepsilon_d$, because the condition
$D\preceq_2t$ is independent of $v$. Hence the sequence
\[
\varepsilon_d(0),\varepsilon_d(1),\ldots
\]
is divided into blocks of $h$ consecutive terms, and every such block
is constant.

Since $D$ is odd, its last binary digit is $1$. If the $t$-block
has value $1$, then $D\preceq_2t$, and hence the last binary digit of
$t$ is $1$; in particular, $t$ is odd. Therefore both $t-1$ and $t+1$
are even, so their last binary digits are $0$. It follows that
\[
D\npreceq_2(t-1)\qquad\text{and}\qquad D\npreceq_2(t+1).
\]
Hence the blocks immediately before and after a $1$-block both have
value $0$. Consequently, every maximal constant interval having value
$1$ consists of exactly one $t$-block and therefore has exactly $h$
terms. On the other hand, two or more consecutive $t$-blocks may all
have value $0$, so they may join to form a longer maximal constant
interval having value $0$.

Now write $n=hq+u$, where $0\le u<h$. The last, possibly incomplete,
$q$-block is
\[
hq,hq+1,\ldots,hq+u
\]
and has length $u+1$.

Since two consecutive blocks cannot both have value $1$, there are only
three possibilities for the values of the $(q-1)$-block and the
$q$-block:
\[
(0,1),\qquad (1,0),\qquad (0,0).
\]

First suppose that the two blocks have values $(0,1)$ or $(1,0)$. In
either case, the value changes when we pass from the $(q-1)$-block to
the $q$-block. Hence a new maximal constant interval begins at $hq$,
and the final maximal constant interval is
\[
hq,hq+1,\ldots,hq+u.
\]
It has length $u+1$. Every earlier maximal constant interval has length
at least $h$, while $u+1\le h$. Therefore the shortest maximal constant
interval has length $u+1$. By Lemma \ref{lem:DM},
\[
C(\sigma_{n,d})=n-(u+1)+1=n-u.
\]
By (3.2), the cases $(0,1)$ and $(1,0)$ occur exactly when
\[
D\preceq_2q\quad\text{or}\quad D\preceq_2(q-1),
\]
respectively. This gives the first case of \eqref{eq:main}.

It remains to consider the possibility $(0,0)$. In this case the
$q$-block joins the preceding $0$-block, so the final maximal constant
interval contains the whole $(q-1)$-block together with the last
$q$-block. In particular, its length is greater than $h$. Since
$n\ge d$ and
\[
\varepsilon_d(d)=\binom dd=1,
\]
a complete maximal constant interval having value $1$ has already
occurred, and, as proved above, every such interval has exactly $h$
terms. Hence the shortest maximal constant interval has length exactly
$h$. By Lemma \ref{lem:DM},
\[
C(\sigma_{n,d})=n-h+1.
\]
By (3.2), the possibility $(0,0)$ is equivalent to
\[
D\npreceq_2q\quad\text{and}\quad D\npreceq_2(q-1),
\]
which is exactly the second case of \eqref{eq:main}. This completes the
proof.
\end{proof}

\begin{corollary}[Odd degree]\label{cor:odd}
If $d$ is odd, then $C(\sigma_{n,d})=n$.
\end{corollary}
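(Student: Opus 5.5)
The plan is to specialize Theorem \ref{thm:main} to odd $d$. If $d$ is odd, then in the factorization $d=2^cD$ with $D$ odd we have $c=0$, so $D=d$ and $h=2^0=1$. Dividing $n$ by $h=1$ gives $q=n$ and $u=0$, since the remainder must satisfy $0\le u<1$.

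Next, substitute these values into \eqref{eq:main}. The second branch gives $n-h+1=n-1+1=n$, and the first branch gives $n-u=n-0=n$. So both branches of the formula return $n$, and the containment condition ($D\preceq_2 q$ or $D\preceq_2(q-1)$) does not need to be checked at all. Therefore $C(\sigma_{n,d})=n$.

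The only thing to check is that Theorem \ref{thm:main} actually applies, i.e.\ that its hypothesis $1\le d\le n$ holds. This is assumed throughout the paper. It also guarantees that $\sigma_{n,d}$ is nonconstant, since $\varepsilon_d(0)=0$ and $\varepsilon_d(d)=1$, which is what Lemma \ref{lem:DM} requires inside the proof of the theorem. With that confirmed, there is no real obstacle: the corollary is a direct substitution, and it recovers the odd-degree result of \cite{ZhangLi2022}.
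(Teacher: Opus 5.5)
Your proposal is correct and matches the paper's proof: with $c=0$ you get $h=1$ and $u=0$, so both branches of \eqref{eq:main} equal $n$ and no containment condition needs checking. Your remark that $1\le d\le n$ ensures $\sigma_{n,d}$ is nonconstant, as Lemma \ref{lem:DM} requires, is a harmless extra check that the paper leaves implicit.
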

\begin{proof}
Here $c=0$, $h=1$, and $u=0$, so both alternatives in \eqref{eq:main} equal $n$.
\end{proof}

\begin{corollary}[Power-of-two degree]\label{cor:power}
If $d=2^k$, then
\[
C(\sigma_{n,2^k})=2^k\left\lfloor\frac n{2^k}\right\rfloor.
\]
\end{corollary}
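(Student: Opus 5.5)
We specialize Theorem \ref{thm:main} to $d=2^k$. Then $c=k$, $D=1$ and $h=2^k$. Writing $n=2^kq+u$ with $0\le u<2^k$ gives $q=\lfloor n/2^k\rfloor$ and $n-u=2^k\lfloor n/2^k\rfloor$. So the claimed formula is exactly the first alternative of \eqref{eq:main}, and it suffices to check that its hypothesis
\[
1\preceq_2 q\quad\text{or}\quad 1\preceq_2(q-1)
\]
always holds.

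The relation $1\preceq_2 s$ just says that the last binary digit of $s$ is $1$, i.e.\ that $s$ is odd. Among the two consecutive integers $q$ and $q-1$, exactly one is odd. We need both to be nonnegative integers for the notation to make sense. Since $n\ge d=2^k$, we have $q\ge1$, so $q-1\ge0$.

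Hence one of the two containment conditions always holds, the first case of \eqref{eq:main} applies, and $C(\sigma_{n,2^k})=n-u=2^k\lfloor n/2^k\rfloor$. There is no real obstacle; the only point needing care is $q\ge1$, which follows from $d\le n$. Consistency with Corollary \ref{cor:odd} (the case $k=0$, where $u=0$) is immediate.
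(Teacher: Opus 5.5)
Your proposal is correct and matches the paper's proof: you specialize Theorem \ref{thm:main} with $D=1$ and $h=2^k$, then observe that exactly one of $q$ and $q-1$ is odd, so the first alternative applies and gives $n-u=2^k\lfloor n/2^k\rfloor$. Your check that $q\ge1$ (from $n\ge d$), so that $q-1\ge 0$, is a small point the paper leaves implicit.
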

\begin{proof}
Here $D=1$ and $h=2^k$. Exactly one of $q$ and $q-1$ is odd, so $1\preceq_2q$ or $1\preceq_2(q-1)$ always holds. Therefore $C=n-u=hq$.
\end{proof}

These two corollaries agree with our previous results in \cite{ZhangLi2022}.

Theorem \ref{thm:main} also yields several structural consequences that are not visible from the two previously known special cases.

\begin{corollary}[Uniform distance from the maximum]\label{cor:bounds}
Let $h=2^{\nu_2(d)}$. For every $n\ge d$,
\[
n-h+1\le C(\sigma_{n,d})\le n.
\]
Equivalently,
\[
0\le n-C(\sigma_{n,d})\le h-1.
\]
\end{corollary}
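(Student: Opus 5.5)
The bounds follow directly from Theorem \ref{thm:main}, so the plan is to read off both cases of \eqref{eq:main} and compare each with $n$ and $n-h+1$. First, $h=2^{\nu_2(d)}$ is exactly the quantity $h=2^c$ in the theorem, since writing $d=2^cD$ with $D$ odd means $c=\nu_2(d)$. Then we write $n=hq+u$ with $0\le u<h$, as in the theorem. The hypothesis $n\ge d$ ensures the theorem applies; note that $q\ge D\ge1$ because $n\ge hD$.

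Next we check each case of \eqref{eq:main} separately.
\begin{itemize}
\item In the first case, $C(\sigma_{n,d})=n-u$. Since $0\le u\le h-1$, this gives $n-h+1\le n-u\le n$.
\item In the second case, $C(\sigma_{n,d})=n-h+1$. This trivially satisfies the lower bound. It satisfies the upper bound because $h\ge1$, so $n-h+1\le n$.
\end{itemize}
In both cases the two-sided bound holds.

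The equivalent form $0\le n-C(\sigma_{n,d})\le h-1$ follows by subtracting the inequalities from $n$.

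No real obstacle is expected. The only point needing care is confirming that $u\le h-1$, which is just the division-with-remainder normalization, together with identifying $h$ with $2^{\nu_2(d)}$. If one wants sharpness, the lower bound is attained whenever the $(0,0)$ case occurs, and the upper bound whenever $u=0$ in the first case. Sharpness is not required by the statement, though.
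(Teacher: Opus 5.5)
Your proposal is correct and matches the paper's proof: you write $n=hq+u$ with $0\le u\le h-1$, identify $h=2^{\nu_2(d)}$ with the $h=2^c$ of Theorem \ref{thm:main}, and check that both alternatives $n-u$ and $n-h+1$ lie in $[n-h+1,n]$. Your sharpness remarks are accurate but not needed for the statement.
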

\begin{proof}
Write $n=hq+u$, $0\le u<h$. By Theorem \ref{thm:main}, either
$C(\sigma_{n,d})=n-u$ or $C(\sigma_{n,d})=n-h+1$.
Since $0\le u\le h-1$, the result follows.
\end{proof}

\begin{corollary}[When the maximum value occurs]\label{cor:max}
If $d$ is odd, then $C(\sigma_{n,d})=n$ for every $n\ge d$. Suppose that $d$ is even, write $d=hD$ as in Theorem \ref{thm:main}, and write $n=hq+u$. Then
\[
C(\sigma_{n,d})=n
\]
if and only if $u=0$ and
\[
D\preceq_2 q\quad\text{or}\quad D\preceq_2(q-1).
\]
\end{corollary}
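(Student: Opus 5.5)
The plan is to read this off Theorem~\ref{thm:main} by comparing each of its two cases with the value $n$; no new combinatorics is needed. For odd $d$, Corollary~\ref{cor:odd} already gives $C(\sigma_{n,d})=n$ for all $n\ge d$, so the first sentence is immediate. For the rest, assume $d$ is even. Then $c\ge1$, and so $h=2^c\ge2$.

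Write $n=hq+u$ with $0\le u<h$, and split according to the two cases of \eqref{eq:main}.

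In the first case, $D\preceq_2 q$ or $D\preceq_2(q-1)$, and the theorem gives $C(\sigma_{n,d})=n-u$. This equals $n$ exactly when $u=0$.

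In the second case, neither containment holds, and the theorem gives $C(\sigma_{n,d})=n-h+1$. Since $h\ge2$, this is at most $n-1<n$, so the maximum is never attained in this case.

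Putting the cases together, $C(\sigma_{n,d})=n$ holds if and only if we are in the first case and $u=0$. That is exactly the stated condition, and it gives both directions of the equivalence at once.

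The only point needing care is the strict inequality in the second case. It relies on $h\ge2$, which is precisely where evenness of $d$ is used; for odd $d$ we would have $h=1$ and both cases collapse to $n$, consistent with Corollary~\ref{cor:odd}. The rest is direct substitution into Theorem~\ref{thm:main}, so I do not expect a genuine obstacle.
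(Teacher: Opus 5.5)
Your proposal is correct and matches the paper's proof: you cite Corollary~\ref{cor:odd} for odd $d$, and for even $d$ you use $h\ge2$ to rule out the alternative $n-h+1$. In the remaining alternative you observe that $n-u=n$ exactly when $u=0$.
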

\begin{proof}
The odd case is Corollary \ref{cor:odd}. For even $d$ we have $h\ge2$, so the second alternative $n-h+1$ in Theorem \ref{thm:main} is strictly smaller than $n$. In the first alternative, $n-u=n$ holds exactly when $u=0$.
\end{proof}

\begin{corollary}[Periodicity of the certificate deficit]\label{cor:period}
Fix $d\ge1$ and put
\[
P=2^{\lfloor\log_2 d\rfloor+1}.
\]
Then for every $n\ge d$,
\[
(n+P)-C(\sigma_{n+P,d})=n-C(\sigma_{n,d}).
\]
Thus $P$ is a period of the deficit $n-C(\sigma_{n,d})$ (not necessarily its least period).
\end{corollary}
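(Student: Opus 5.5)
We plan to read off the deficit directly from Theorem~\ref{thm:main} and check that shifting $n$ by $P$ changes neither $u$ nor the containment conditions. Write $d=2^cD$ with $D$ odd and $h=2^c$, as in the theorem. Put $k=\lfloor\log_2 d\rfloor$, so $P=2^{k+1}$. Since $d=hD<2^{k+1}$, we get $D<2^{k+1-c}$. Set $M=2^{k+1-c}$, so that $P=hM$ and $M$ is a power of two strictly exceeding $D$. Now write $n=hq+u$ with $0\le u<h$. Then
\[
n+P=h(q+M)+u ,
\]
so the remainder $u$ is unchanged and the quotient becomes $q+M$.

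By Theorem~\ref{thm:main}, the deficit $\Delta_d(n)=n-C(\sigma_{n,d})$ equals $u$ when $D\preceq_2 q$ or $D\preceq_2(q-1)$, and equals $h-1$ otherwise. Since $u$ is unchanged, it suffices to show that, for every integer $s\ge0$,
\[
D\preceq_2 s \iff D\preceq_2 (s+M).
\]
We apply this with $s=q$ and $s=q-1$. Note that $q\ge1$ because $n\ge d\ge h$, so $q-1\ge0$.

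The key step is this binary claim. Because $M=2^{k+1-c}>D$, all $1$-digits of $D$ lie in positions below $k+1-c$. Adding $M$ to $s$ leaves the low $k+1-c$ binary digits of $s$ untouched, since any carries propagate only upward from position $k+1-c$. Whether $D\preceq_2 s$ depends only on those low digits, so the equivalence holds. Equivalently, $D\preceq_2 s$ depends only on $s \bmod M$. Applying the claim to $s=q$ and $s=q-1$ shows that the case of Theorem~\ref{thm:main} selected for $n$ and for $n+P$ is the same. The deficit in that case ($u$ or $h-1$) is also the same, which gives the stated identity.

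The only point needing care is checking that $D<M$, i.e.\ that $P/h$ exceeds $D$. This follows from $2^k\le d<2^{k+1}$ after dividing by $h$. We should also confirm that $n+P\ge d$, which is trivially true, so that Theorem~\ref{thm:main} applies to both $n$ and $n+P$. Everything else is bookkeeping.
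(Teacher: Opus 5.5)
Your proposal is correct and takes essentially the same route as the paper. Your $M=2^{k+1-c}$ is the paper's $2^m$ with $m=\lfloor\log_2 D\rfloor+1$, and both arguments note that $u$ is unchanged while $D\preceq_2 s$ depends only on $s \bmod M$, so the same alternative of Theorem~\ref{thm:main} applies to $n$ and $n+P$. You also make explicit two checks the paper leaves implicit, namely $q\ge1$ and $D<M$.
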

\begin{proof}
Write $d=2^cD=hD$, where $D$ is odd, and let
$m=\lfloor\log_2D\rfloor+1$. Then $P=h2^m$. If $n=hq+u$ with $0\le u<h$, then
\[
n+P=h(q+2^m)+u.
\]
The residue $u$ is unchanged. Moreover, all $1$-bits of $D$ occur among its lowest $m$ binary positions, and adding $2^m$ leaves those positions unchanged. Hence
\[
D\preceq_2q\iff D\preceq_2(q+2^m),
\]
and similarly
\[
D\preceq_2(q-1)\iff D\preceq_2(q+2^m-1).
\]
The same alternative of Theorem \ref{thm:main} therefore applies to $n$ and $n+P$, with the same deficit from the ambient dimension.
\end{proof}

The periodicity in Corollary~\ref{cor:period} is inherited from binary digit containment, but its least period need not equal the least period of the underlying value sequence. Indeed, the latter is known to have least period $2^{\lfloor\log_2 d\rfloor+1}$ \cite{CanteautVideau2005,CusickLiStanica2008}. The next theorem determines the least period of the derived certificate-complexity deficit itself. To the best of our knowledge, this exact least-period classification has not appeared previously.

\begin{theorem}[Least period of the certificate deficit]\label{thm:leastperiod}
For fixed $d\ge1$, let
\[
\Delta_d(n)=n-C(\sigma_{n,d}),\qquad n\ge d.
\]
The least positive period of $\Delta_d$ is
\[
\operatorname{per}(\Delta_d)=
\begin{cases}
1, & d \text{ is odd},\\[1mm]
d, & d \text{ is a power of two},\\[1mm]
2^{\lfloor\log_2 d\rfloor+1}, & d \text{ is even and not a power of two}.
\end{cases}
\]
\end{theorem}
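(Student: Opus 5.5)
The plan is to split into the three cases of the statement. In each case I read $\Delta_d(n)$ off Theorem~\ref{thm:main} and use Corollary~\ref{cor:period} for the upper bound on the period. The one structural fact I need is that the domain $\{n\ge d\}$ is an infinite interval extending to the right. Hence the set of periods of $\Delta_d$ is closed under taking positive differences, so the least period divides every period. In particular it divides $P=2^{\lfloor\log_2 d\rfloor+1}$, and it is therefore a power of two.

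\emph{Odd $d$.} Corollary~\ref{cor:odd} gives $\Delta_d\equiv0$, so the least period is $1$.

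\emph{$d=2^k$.} Here $h=d$ and, by Corollary~\ref{cor:power}, $\Delta_d(n)=u=n\bmod d$. This is clearly $d$-periodic. Suppose $p$ is any period. Evaluating at $n=hq$, where $\Delta_d=0$, shows $(hq+p)\bmod d=0$, so $d\mid p$. Hence the least period is exactly $d$. For $k=0$ this agrees with the odd case.

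\emph{$d$ even, not a power of two.} Write $d=hD$ with $h\ge2$ and $D\ge3$ odd. Let $m=\lfloor\log_2 D\rfloor+1\ge2$, so that $P=h2^m$. Corollary~\ref{cor:period} shows $P$ is a period. By the divisibility remark, it then suffices to show that $P/2=h2^{m-1}$ is not a period.

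To do this I take $n=d=hD$, so $q=D$ and $u=0$. Since $D\preceq_2 q$, the first alternative of Theorem~\ref{thm:main} gives $\Delta_d(d)=0$. For $n'=d+P/2$ we have $q'=D+2^{m-1}$ and still $u=0$. The bit $2^{m-1}$ is the leading bit of $D$, so
\[
q' = (D-2^{m-1})+2^m,
\]
which has bit $m-1$ cleared; hence $D\npreceq_2 q'$. Similarly,
\[
q'-1=(D-1-2^{m-1})+2^m .
\]
Because $D$ is odd with $D>2^{m-1}$, we have $D-1\ge 2^{m-1}$, so no borrow occurs and bit $m-1$ is again $0$. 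Thus $D\npreceq_2(q'-1)$. The second alternative of Theorem~\ref{thm:main} then gives $\Delta_d(n')=h-1\ge1\neq0$, so $P/2$ is not a period.

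The main point to get right is this bit bookkeeping, and in particular the use of $D\ge3$. It guarantees that subtracting $1$ from $D$ does not disturb the leading bit. It is exactly what fails when $D=1$, which is why the power-of-two case has the smaller period $d$. The general divisibility fact for periods on a right-infinite domain is standard: if $p<p'$ are both periods, then
\[
f(n+p'-p)=f(n+p'-p+p)=f(n+p'),
\]
so $p'-p$ is also a period, and the Euclidean algorithm shows the least period divides every period.
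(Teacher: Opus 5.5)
Your proposal is correct and follows essentially the same route as the paper. It uses the same three cases and the same witness pair $n=d$ versus $n=d+P/2$, in which the leading bit $2^{m-1}=R$ of $D$ is cleared in both $q'$ and $q'-1$. It then applies the same divisibility reduction to $P/2$; the only differences are cosmetic: you prove the ``least period divides every period'' fact that the paper merely asserts, and in the power-of-two case you show $d\mid p$ directly rather than excluding $p<d$.
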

\begin{proof}
If $d$ is odd, Corollary~\ref{cor:odd} gives $C(\sigma_{n,d})=n$, so
$\Delta_d(n)=0$ and its least positive period is $1$.

If $d=2^k$ with $k\ge1$, Corollary~\ref{cor:power} gives
\[
\Delta_d(n)=n-d\left\lfloor\frac nd\right\rfloor=n\bmod d.
\]
Thus $d$ is a period. To see that it is the least positive period, suppose
that $q$ were a period with $1\le q<d$. Since
\[
\Delta_d(d)=0,
\]
periodicity would require $\Delta_d(d+q)=0$. On the other hand,
\[
\Delta_d(d+q)=(d+q)\bmod d=q>0,
\]
a contradiction. Hence no positive integer smaller than $d$ is a period,
and the least positive period is $d$.

It remains to suppose that $d$ is even and not a power of two. Write
\[
d=hD,\qquad h=2^{\nu_2(d)},
\]
where $D>1$ is odd, and put
\[
R=2^{\lfloor\log_2D\rfloor}.
\]
Then
\[
P=2hR=2^{\lfloor\log_2d\rfloor+1}
\]
is a period by Corollary~\ref{cor:period}. We show that $P/2=hR$ is not a
period.

At $n=d=hD$ we have $u=0$ and $q=D$, so $D\preceq_2q$ and Theorem~\ref{thm:main}
gives $\Delta_d(d)=0$.

Now consider $n=d+hR=h(D+R)$. Again $u=0$, while $q=D+R$. Since
$R\le D<2R$, write $D=R+s$ with $1\le s<R$. Then
\[
q=D+R=2R+s,\qquad q-1=2R+s-1.
\]
The binary digit in the $R$-position is $1$ in $D=R+s$, but it is $0$
in both $2R+s$ and $2R+s-1$. Consequently
\[
D\npreceq_2q\qquad\text{and}\qquad D\npreceq_2(q-1).
\]
The second alternative of Theorem~\ref{thm:main} therefore applies, and
\[
\Delta_d(d+hR)=h-1>0.
\]
Thus $P/2$ is not a period.

Finally, the least positive period of a periodic sequence divides every
other period. Since $P$ is a power of two, any proper divisor of $P$ divides
$P/2$; if the least period were smaller than $P$, then $P/2$ would also be a
period, a contradiction. Hence the least period is $P$.
\end{proof}

Theorem \ref{thm:main} therefore not only settles degrees having any number of nonzero binary digits, but also shows that the deviation of certificate complexity from $n$ is controlled entirely by the $2$-adic part of the degree and follows a finite binary periodic pattern.

\begin{example}
For $d=6=2\cdot3$, $h=2$ and $D=3$. Thus, writing $n=2q+u$,
\[
C(\sigma_{n,6})=\begin{cases}n-u,&3\preceq_2q\text{ or }3\preceq_2(q-1),\\n-1,&\text{otherwise}.\end{cases}
\]
For instance, $C(\sigma_{6,6})=6$, $C(\sigma_{7,6})=6$, $C(\sigma_{8,6})=8$, and $C(\sigma_{9,6})=8$. Notice that at $n=8$ the final Hamming weight forms a one-term $0$-interval, which forces certificate complexity $8$.
\end{example}

\begin{example}
For $d=14=2\cdot7$, $h=2$ and $D=7$. At $n=14$, $q=7$, so $C(\sigma_{14,14})=14$. At $n=18$, $q=9$ and neither $9$ nor $8$ contains all three $1$-bits of $7$, hence $C(\sigma_{18,14})=17$.
\end{example}

\begin{remark}
The theorem gives an $O(\log n)$ bit test once $d$ and $n$ are written in binary. The quantity $2^{\nu_2(d)}$ controls the minimum complete run length, while the odd part $D=d/2^{\nu_2(d)}$ determines whether the final block begins a new run.
\end{remark}

\section{Conclusion}
We have determined $C(\sigma_{n,d})$ for all $1\le d\le n$, including the even non-power-of-two degrees not covered by our previous results \cite{ZhangLi2022}. Thus the certificate complexity of elementary symmetric Boolean functions is now explicitly determined for arbitrary degree. Beyond recovering the previously known odd-degree and power-of-two cases, the general formula exposes the binary structure responsible for the answer: the $2$-adic factor $2^{\nu_2(d)}$ bounds the possible deficit from $n$, while the odd part of $d$ determines the exact alternative through binary digit containment.

The formula also yields structural consequences that are not visible from the earlier special cases alone. In particular,
\[
n-2^{\nu_2(d)}+1\le C(\sigma_{n,d})\le n,
\]
with an explicit characterization of the degrees and values of $n$ for which $C(\sigma_{n,d})=n$. We also determine the least positive period of the deficit $n-C(\sigma_{n,d})$: it is $1$ for odd $d$, equals $d$ when $d$ is a power of two, and equals $2^{\lfloor\log_2 d\rfloor+1}$ for even non-power-of-two $d$. These results show that certificate complexity in this family is governed by a finite periodic pattern determined by the binary expansion of the degree. This completes the problem considered in \cite{ZhangLi2022} and provides a uniform framework for the certificate complexity of all elementary symmetric Boolean functions.

\section*{Declaration of competing interest}
The authors declare that they have no known competing financial interests or personal relationships that could have appeared to influence the work reported in this paper.

\section*{Declaration of generative AI and AI-assisted technologies in the manuscript preparation process}

During the preparation of this work, the authors used OpenAI ChatGPT to assist with
mathematical exploration, literature searching, organization of the manuscript, and
language and readability. The authors independently reviewed and verified the
mathematical arguments, references, and final text, edited the content as needed,
and take full responsibility for the content of the publication.


\begin{thebibliography}{99}

\bibitem{ArnoldHarrison1963} R. F. Arnold and M. A. Harrison, Algebraic properties of symmetric and partially symmetric Boolean functions, \emph{IEEE Trans. Electron. Comput.} EC-12 (1963), 244--251.

\bibitem{BuhrmanDeWolf2002} H. Buhrman and R. de Wolf, Complexity measures and decision tree complexity: a survey, \emph{Theoret. Comput. Sci.} 288 (2002), 21--43.

\bibitem{CanteautVideau2005} A. Canteaut and M. Videau, Symmetric Boolean functions, \emph{IEEE Trans. Inform. Theory} 51 (2005), 2791--2811.

\bibitem{CookDworkReischuk1986} S. A. Cook, C. Dwork, and R. Reischuk, Upper and lower time bounds for parallel random access machines without simultaneous writes, \emph{SIAM J. Comput.} 15 (1986), 87--97.

\bibitem{CusickLiStanica2008} T. W. Cusick, Y. Li, and P. St\u{a}nic\u{a}, Balanced symmetric functions over $\mathrm{GF}(p)$, \emph{IEEE Trans. Inform. Theory} 54 (2008), 1304--1307.

\bibitem{CusickLiStanica2009} T. W. Cusick, Y. Li, and P. St\u{a}nic\u{a}, On a conjecture for balanced symmetric Boolean functions, \emph{J. Math. Cryptol.} 3 (2009), 273--290.

\bibitem{DahiyaMahajan2023}
Y. Dahiya, M. Mahajan,
On (simple) decision tree rank,
Theoret. Comput. Sci. 978 (2023) 114177.
doi:10.1016/j.tcs.2023.114177.

\bibitem{Lucas1878} E. Lucas, Th\'eorie des fonctions num\'eriques simplement p\'eriodiques, \emph{Amer. J. Math.} 1 (1878), 184--196, 197--240, 289--321.

\bibitem{MittalNairPatro2021}
R. Mittal, S.S. Nair, S. Patro,
On query complexity measures and their relations for symmetric functions,
in: S. Kalyanasundaram, A. Maheshwari (Eds.),
Algorithms and Discrete Applied Mathematics, CALDAM 2024,
Lecture Notes in Computer Science, vol. 14508,
Springer, Cham, 2024, pp. 59--73,
doi:10.1007/978-3-031-52213-0\_5.

\bibitem{Nisan1991} N. Nisan, CREW PRAMs and decision trees, \emph{SIAM J. Comput.} 20 (1991), 999--1007.

\bibitem{Shannon1938} C. E. Shannon, A symbolic analysis of relay and switching circuits, \emph{Trans. AIEE} 57 (1938), 713--723.

\bibitem{ZhangLi2022} J. Zhang and Y. Li, Certificate complexity of elementary symmetric Boolean functions, \emph{Theoret. Comput. Sci.} 938 (2022), 16--23. DOI: 10.1016/j.tcs.2022.09.034.

\bibitem{ZhangLiAdeyeye2021} J. Zhang, Y. Li, and J. O. Adeyeye, Sensitivities and block sensitivities of elementary symmetric Boolean functions, \emph{J. Math. Cryptol.} 15 (2021), 434--453. DOI: 10.1515/jmc-2020-0042.

\end{thebibliography}
\end{document}